\newtheorem{theorem}{Theorem}[section]
\newtheorem{lemma}[theorem]{Lemma}
\newtheorem{corollary}[theorem]{Corollary}
\newcommand{\q}{\quad}
\newcommand{\qq}{\quad\quad}
\newcommand{\al}{\alpha}
\newcommand{\be}{\beta}
\newcommand{\de}{\delta}
\newcommand{\ez}{\epsilon}
\newcommand{\la}{\lambda}
\newcommand{\vp}{\varphi}
\newcommand{\note}[1]{\vskip.3cm
\fbox{%
\parbox{0.93\linewidth}{\footnotesize #1}} \vskip.3cm}
\def\rr{{\mathbb R}}
\numberwithin{equation}{section}
\begin{document}

\title[ Sharp bounds for general commutators]{Sharp bounds  for general commutators
on weighted Lebesgue spaces}

\author{Daewon Chung, Mar\'{\i}a Cristina Pereyra and Carlos Perez.}

\begin{abstract}
We show that if a linear operator $T$ is bounded  on weighted Lebesgue
space $L^2(w)$ and obeys a linear bound with respect to the $A_2$ constant
of the weight, then its commutator  $[b,T]$ with a function $b$ in $BMO$ will obey
a quadratic bound with respect to the $A_2$ constant of the weight.
We  also prove that the $k$th-order commutator
$T^k_b=[b,T^{k-1}_b]$ will obey a bound that is a power $(k+1)$
of the $A_2$ constant of the weight. Sharp extrapolation provides corresponding $L^p(w)$ estimates.
In particular these estimates hold for $T$ any Calder\'on-Zygmund singular
integral operator. The results are sharp in terms of the growth of the operator norm with
respect to the $A_p$ constant of the weight for all $1<p<\infty$,
all $k$, and all dimensions, as examples involving the Riesz transforms,
 power functions and power weights show.
\end{abstract}

\address{Daewon Chung\\
Department of Mathematics and Statistics
MSC01 1115\\ 1 University of New Mexico\\
Albuquerque, NM 87131-0001} \email{midiking@math.unm.edu}

\address{Mar\'{\i}a Cristina Pereyra\\
Department of Mathematics and Statistics\\
MSC01 1115\\ 1 University of New Mexico\\
Albuquerque, NM 87131-0001}
 \email{crisp@math.unm.edu}

\address{Carlos P\'erez\\
Departamento de An\'alisis Matem\'atico, Facultad de Matem\'aticas,
Universidad De Sevilla, 41080 Sevilla, Spain.}
\email{carlosperez@us.es}


\subjclass[2010]{Primary  42B20, 42B25. Secondary 46B70, 47B38.}

\keywords{commutators, singular integrals, BMO, $A_{2}$, $A_{p}$}
\thanks{The third author would like
to acknowledge the support of Spanish Ministry of Science and
Innovation via grant MTM2009-08934.} \maketitle



\section{Introduction}

Singular integral operators are known to be bounded in weighted
Lebesgue spaces $L^p(w)$ if the weight belongs to the $A_p$ class of
Muckenhoupt.

Recently there has been renewed interest in understanding the
dependence of the operator norm in  terms of the $A_p$ constant of
the weight, more precisely one seeks estimates of the type,
$$
\|T\|_{L^p(w)} \leq \vp_p([w]_{A_p})\qquad 1<p<\infty,
$$
where the  function $\vp_p:[1,\infty)\to [0,\infty)$  is optimal
in terms of growth.  The first result of this type was obtained by
S. Buckley \cite{B} who showed that the maximal function obeyed
such estimates with $\vp_p(t)=c_pt^{\frac{1}{p-1}}$ for
$1<p<\infty$, and this is optimal (see \cite{Le1} for another
recent proof). This problem has attracted renewed attention
because of the work of Astala, Iwaniec and Saksman \cite{AIS}.
They proved sharp regularity results for solutions to the Beltrami
equation, assuming that the operator norm of the Beurling-Ahlfors
transform grows linearly in terms of the $A_p$ constant for $p\geq
2$. This linear growth
 was proved by S. Petermichl and A.
Volberg \cite{PetV} and by Petermichl \cite{Pet1,Pet2} for the
Hilbert transform and the Riesz Transforms. In these papers it has
been shown that if $T$ is any of these operators, then
\begin{equation}\label{A2conjec}
\|T\|_{L^p(w)}\le
c_{p,n}\,[w]_{A_p}^{\max\left\{1,\frac{1}{p-1}\right\}}
\qquad1<p<\infty,
\end{equation}
and the exponent $\max\left\{1,\frac{1}{p-1}\right\}$ is best
possible. It has been conjectured, and very recently proved \cite{Hyt},
 that the same estimate holds for
any Calder\'on-Zygmund operator $T$. By the sharp version of the
Rubio de Francia extrapolation theorem \cite{DGPerPet}, it suffices to
prove this inequality for $p=2$, namely
\begin{equation}\label{CZA2conjecture}
\|T\|_{L^2(w)}\le c_{n}\, [w]_{A_2}.
\end{equation}
%
We remit the reader to \cite{CrMP2} for a new proof and for generalizations and applications. The linear growth in $L^2(w)$ has been shown to hold for
dyadic operators (martingale transform \cite{Wi}, dyadic square
function \cite{HTV}, dyadic paraproduct \cite{Be}), or for operators
who have lots of symmetries and can be written as averages of dyadic
shift operators (such us the Hilbert transform \cite{Pet1}, Riesz
transforms \cite{Pet2},  Beurling transform \cite{PetV}, \cite{DV}).
All  these estimates were obtained using Bellman functions. Recently all
the above results have been recovered using different sets of
techniques,  and obtaining linear bounds for  \textcolor{blue}{a} larger class of
Haar shift operators \cite{LPetR},  \textcolor{red}{
\cite{CrMP0}. } In particular, in the latter paper  \cite{CrMP0}, no  Bellman function techniques nor
any two weight results are used, and the methods can be extended to other important
operators in Harmonic Analysis such as dyadic square
functions and paraproducts, maximal singular integrals and the
vector-valued maximal function as can be found in \cite{CrMP1}. The sharp bound \eqref{CZA2conjecture}  for any Calder\'on-Zygmund operator $T$ has been proved in \cite{Hyt} by T. Hyt\"onen.  Hyt\"onen's proof is based on  approximating
$T$ by generalized dyadic Haar shift operators with good bounds combined with the key fact that to prove \eqref{CZA2conjecture} it is enough to prove the corresponding weak type $(2,2)$ estimate with the same linear bound as proved in \cite{PTV}. A direct proof avoiding this weak $(2,2)$ reduction can be found in \cite{HytPTV}.  A bit earlier, in \cite{Le2}, the sharp $L^p(w)$ bound for $T$ was obtained for values of $p$ outside the interval $(3/2,3)$ and the proof is based on the corresponding estimates for the intrinsic square functions.

It should be mentioned that until the $A_2$-conjecture was proved,
 only the following special case
\begin{equation}\label{A1resultforT}
\|T\|_{L^p(w)} \leq C_p\,[w]_{A_1} \qquad 1<p<\infty,
\end{equation}
had been shown to be true for any Calder\'on-Zygmund operator.
Observe that the condition imposed on the weight is the $A_1$ weight
condition which is stronger than $A_p$ but there is a gain in the
exponent since it is linear for any $1<p<\infty$ (compare with
\eqref{A2conjec}). This has been shown in \cite{LOP1,LOP2} and we
remit the reader to \cite{P5}  for a survey on this topic.

The main purpose of this paper is to prove estimates similar to
\eqref{A2conjec} for commutators of appropriate linear operators $T$
with $BMO$ functions $b$. These operators are defined formally by
the expression
$$[b,T]f=b T(f) - T(b\,f). $$
When $T$ is a singular integral operator, these operators were
considered by Coifman, Rochberg and Weiss in \cite{CRW}. Although
the original interest in the study of such operators was related to
generalizations of the classical factorization theorem for Hardy
spaces many other applications have been found.


The main result from \cite{CRW} states that $[b, T]$ is a bounded
operator on $L^{p}( \mathbb R^{n} )$, $1<p<\infty$, when $b$ is a
$BMO$ function and $T$ is a singular integral operator. In fact, the
$BMO$ condition of $b$ is also a necessary condition for the
$L^{p}$-boundedness of the commutator when $T$ is the Hilbert
transform. Later on a different proof was given by J. O. Str\"omberg
(cf. \cite{Tor} p.417) with the advantage that it allows to show
that these commutators are also bounded on weighted $L^p(w)$, when $w\in A_p$.
This approach is based on the use of the classical C.
Fefferman-Stein maximal function and it is not precise enough for
further developments. Indeed, we may think that these operators
behave as Calder\'on-Zygmund operators, however there are some
differences. For instance, an interesting fact is that, unlike what
it is done with singular integral operators, the proof of the
$L^{p}$-boundedness of the commutator does not rely on a weak type
$(1,1)$ inequality.  In fact, simple examples show that in general
$[b,T]$ fails to be of weak type $(1,1)$ when $b \in BMO$. This was
observed by the third author in \cite{P1} where it is also shown
that there is an appropriate weak-$L(\log L)$ type estimate
replacement. This shows that the operator cannot be a
Calder\'on-Zygmund singular integral operator. To stress this point
of view it is also shown by the third author \cite{P2} that the
right operator controlling $[b,T]$ is $M^2=M\circ M$, instead of the
Hardy-Littlewood maximal function $M$.


In the present paper we pursue this point of view by showing that
commutators have an extra ``bad" behavior from the point of view of
the $A_p$ theory of weights that it is not reflected in the
classical situation. Our argument will be based on
the second proof for the $L^p$-boundedness of the commutator
presented in \cite{CRW}. This proof is interesting because there is
no need to assume that $T$ is a singular integral operator, to show
the boundedness of the commutator it is enough to assume that the
operator $T$ is linear and bounded on $L^p(w)$ for any $w\in A_p$.
These ideas were exploited in \cite{ABKP}.

The first author showed in \cite{Ch} that the commutator with the
Hilbert transform obeys an estimate of the following type,
\begin{equation}\label{A2resultforComm}
\| [b,H] \|_{L^2(w)} \leq C\, [w]_{A_2}^2\|b\|_{BMO},
\end{equation}
and he also showed that the quadratic growth with respect to the
$A_2$ constant of the weight is sharp. The techniques used in that
paper rely  very much in dyadic considerations  and the
use of Bellman function arguments. Using recent
results on Haar shifts operators, he also deduced the quadratic
growth for commutators of Haar shift operators and operators in their
convex hull, including the Riesz transforms and the Beurling-Ahlfors
operator, see \cite{Ch} for the

Also, it should be mentioned that there is a corresponding version of  \eqref{A1resultforT} for commutators of any Calder\'on-Zygmund operator with quadratic growth as in \eqref{A2resultforComm}. This is proved in \cite{O} where an endpoint estimate can also be found.


By completely different methods, we show in this paper that if an
operator obeys an initial linear bound in $L^2(w)$, then its
commutator will obey a quadratic bound in $L^2(w)$. In light
of the positive resolution of the $A_2$-conjecture,  we conclude that
the commutator of any Calder\'on-Zygmund singular integral operator
and a $BMO$ function obeys a quadratic bound in $L^2(w)$.  In fact we show
that if an operator $T$ obeys a bound in $L^2(w)$ of the form $\vp
([w]_{A_2})$, then its $k$-th order commutator with $b\in BMO$,
$T_b^k:= [b, T^{k-1}_b]$, will obey a bound of the form $c_n^k k!
\vp(\gamma_n[w]_{A_2}) [w]^k_{A_2} \|b\|^k_{BMO}$. Observe that if
we consider the special case of Calder\'on-Zygmund operators  with
kernel $K$ then
$$ T_b^k(f)(x)\int_{\mathbb
R^n} (b(x)-b(y))^kK(x,y)f(y)\,dy, $$
and the larger $k$ is,  the more singular the operator will be,
because the exponent in $[w]^k_{A_2}$ becomes larger.


Corresponding estimates in $L^p(w )$ are deduced by the sharp
version of the Rubio de Francia extrapolation theorem found in
\cite{DGPerPet}, and are shown to be sharp in the case of the Hilbert
and Riesz transforms (in any dimension) for all $1<p<\infty$, and
for all $k\geq 1$.

It will be interesting to recover the result for the higher order
commutators with the Haar shift operators (and hence for the Hilbert,
Riesz and Beurling transforms) using the dyadic methods, but so far
we do not know how to do this.

Recently extensions of our result to two weight settings, fractional integrals and more
have been obtained by D. Cruz-Uribe and Kabe Moen, see \cite{CrM}.

The remainder of this paper is organized as follows.  In Section
\ref{section:prelim} we gather some basic results. In Section
\ref{proofmainresult} we give the proof of the main result. In
Section \ref{Examples} we show with examples that the main theorem
in the paper,
and its corollaries are sharp. Finally, the last section is an appendix where we
show a result that it is claimed but never proved in the literature,
a sharp reverse H\"older's inequality for $A_2$ weights.


\section{Preliminary results} \label{section:prelim}

\subsection{A Sharp John-Nirenberg}

For a locally integrable $b:\rr^n\to \rr$ we define
$$ \|b\|_{BMO} = \sup_Q \frac{1}{|Q|}\int_{Q} |b(y)-
b_{Q}|\,dy <\infty ,$$
where the supremum is taken over all cubes $Q\in\rr^n$ with sides
parallel to the axes, and
$$b_{Q}= \frac1{|Q|}\int_Q b(y)\,dy.$$

The main relevance of $BMO $ is because of its exponential
self-improving property, recorded in the celebrated
John-Nirenberg Theorem \cite{JN}. We need a very precise version of
it, as follows:

\begin{theorem}\label{sharpJ-N}[Sharp John-Nirenberg]
There are dimensional constants $0\leq \al_n<1<\be_n$ such that
\begin{equation}\label{sharpJN}
\sup_Q \frac{1}{  |Q| } \int_{Q} \exp \left (
\frac{\al_n}{\|b\|_{BMO}} |b(y)- b_{Q}| \right  )\, dy \le \be_n.
\end{equation}
In fact we can take $\al_n=\frac{1}{2^{n+2}}$.
\end{theorem}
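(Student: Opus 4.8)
The plan is to derive the sharp John–Nirenberg inequality \eqref{sharpJN} directly from the classical John–Nirenberg distributional estimate, tracking the constants carefully so that the exponential-integrability threshold $\al_n$ comes out as an explicit dimensional constant. First I would recall the standard Calderón–Zygmund stopping-time proof of John–Nirenberg: fix a cube $Q$, normalize so that $\|b\|_{BMO}=1$, and for $\la>1$ run a Calderón–Zygmund decomposition of $b-b_Q$ on $Q$ at height $\la$. This produces pairwise disjoint dyadic subcubes $\{Q_j\}$ with $\la < \frac{1}{|Q_j|}\int_{Q_j}|b-b_Q| \le 2^n\la$, on whose complement $|b-b_Q|\le\la$ a.e., and with $\sum_j |Q_j| \le \la^{-1}\int_Q |b-b_Q| \le \la^{-1}|Q|$. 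Since $|b_{Q_j}-b_Q|\le 2^n\la$, iterating this decomposition $k$ times gives the distributional bound
\begin{equation}\label{JNdist}
|\{x\in Q : |b(x)-b_Q| > 2^n\la k\}| \le \la^{-k}\,|Q|,
\end{equation}
valid for every $\la>1$ and every integer $k\ge 1$.

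Next I would convert \eqref{JNdist} into the exponential integrability statement. Choosing $\la=2$ in \eqref{JNdist} and setting $t=2^{n+1}k$, one gets $|\{x\in Q: |b(x)-b_Q|>t\}|\le 2\cdot 2^{-t/2^{n+1}}|Q|$ for $t$ in the arithmetic progression $2^{n+1}\zzz_{\ge 1}$, and a routine monotonicity argument fills in all $t>0$ (at the cost of an extra harmless constant absorbed into $\be_n$). Then, for any $\al>0$,
\begin{align*}
\frac{1}{|Q|}\int_Q \exp\!\big(\al|b(y)-b_Q|\big)\,dy
&= 1 + \frac{\al}{|Q|}\int_0^\infty e^{\al t}\,\big|\{y\in Q: |b(y)-b_Q|>t\}\big|\,dt\\
&\le 1 + C\,\al\int_0^\infty e^{(\al - (\log 2)/2^{n+1})\,t}\,dt,
\end{align*}
which is finite and bounded by a dimensional constant $\be_n$ as soon as $\al < (\log 2)/2^{n+1}$; in particular $\al_n = 2^{-(n+2)} < (\log 2)/2^{n+1}$ works since $\log 2 > 1/2$. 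Undoing the normalization $\|b\|_{BMO}=1$ replaces $\al$ by $\al/\|b\|_{BMO}$, giving exactly \eqref{sharpJN}. Taking the supremum over all $Q$ is immediate because every bound above is uniform in $Q$.

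The main obstacle — really the only delicate point — is bookkeeping of constants rather than any conceptual difficulty: one must make sure that the geometric decay rate $\la^{-k}$ in \eqref{JNdist} survives the passage to a continuous exponent $t$ with a decay constant strictly larger than $\al_n$, so that the integral $\int_0^\infty e^{(\al_n-\,\cdot\,)t}\,dt$ genuinely converges with a \emph{dimensional} bound. I would keep $\la=2$ throughout (rather than optimizing $\la$) precisely to make the dependence on $n$ transparent and to land on the clean value $\al_n = 1/2^{n+2}$ advertised in the statement, noting that any smaller positive $\al_n$ would serve equally well for the applications. One should also remark that the argument shows $\al_n$ can be taken independent of $b$ and that $\be_n$ depends only on $n$, both of which are what the later sections need.
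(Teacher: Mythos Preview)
Your argument is correct: the Calder\'on--Zygmund iteration gives the distributional bound \eqref{JNdist}, the choice $\la=2$ yields decay rate $(\log 2)/2^{n+1}$, and since $\log 2>1/2$ the value $\al_n=2^{-(n+2)}$ is admissible, with the resulting $\be_n$ a finite dimensional constant (in fact your computation gives $\be_n=1+2/(2\log 2-1)$, independent of $n$). The bookkeeping in passing from the discrete levels $2^{n+1}k$ to continuous $t$ is handled correctly.

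However, the paper does not actually prove this theorem: it simply refers the reader to pp.~31--32 of Journ\'e's lecture notes \cite{J}, explicitly noting that the proof there is ``different from the standard one.'' What you have written \emph{is} the standard stopping-time proof. So there is no meaningful comparison of approaches to make here---your proof is self-contained and correct, whereas the paper outsources the argument entirely. If anything, your write-up supplies what the paper omits; the only caveat is that the paper seems to value the alternative argument in \cite{J} (possibly for reasons of exposition or sharper constants), so you may wish to look at that reference to see what is gained there.
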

For the proof of this we remit to p. 31-32 of \cite{J} where a proof
different from the standard one can be found.

We derive from Theorem~\ref{sharpJ-N} the following Lemma~\ref{BMO&A2}.
 that will be used in the proof
of the main theorem. First  recall that a weight $w$ \, satisfies
the \,$A_2$\, condition if
$$
[w]_{A_2}=  \sup_Q \left (\frac{1}{|Q|}\int_{Q}w \right )\left (\frac{1}{|Q|}\int_{Q}w
^{-1}\right ) <\infty,
$$
where the supremum is taken over all cubes $Q\in\rr^n$ with sides
parallel to the axes. Notice that $[w]_{A_2}\geq 1$.
%

It is well known that if $w\in A_2$  then $b=\log w \in BMO$. A
partial converse also holds, if $b\in BMO$ there is an $s_0>0$ such
that $w=e^{sb}\in A_p, \, |s|\leq s_0$. As a consequence of the
Sharp John-Nirenberg Theorem  we can get a more precise version of
this partial converse.

\begin{lemma}\label{BMO&A2} Let $b\in BMO$ and let $\al_n<1<\be_n$  be the dimensional constants
from~\eqref{sharpJN}. Then
$$
s\in  \rr, \q |s| \leq \frac{\al_n}{\|b\|_{BMO}} \Longrightarrow
e^{s\,b}\in A_2 \q \mbox{and} \q [e^{s\,b}]_{A_2}\leq \be_n^2.
$$
\end{lemma}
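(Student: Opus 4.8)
The plan is to derive the conclusion directly from the Sharp John--Nirenberg estimate \eqref{sharpJN} by estimating each of the two averages appearing in $[e^{sb}]_{A_2}$ separately. Write $w = e^{sb}$. Then
\[
[w]_{A_2} = \sup_Q \left(\frac1{|Q|}\int_Q e^{sb}\right)\left(\frac1{|Q|}\int_Q e^{-sb}\right),
\]
so it suffices to bound $\frac1{|Q|}\int_Q e^{\pm sb}$ by $\beta_n$ uniformly in $Q$, under the hypothesis $|s|\|b\|_{BMO}\le \alpha_n$.

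First I would insert the constant $b_Q$: for the first factor, $\frac1{|Q|}\int_Q e^{sb} = e^{sb_Q}\cdot\frac1{|Q|}\int_Q e^{s(b(y)-b_Q)}\,dy$, and similarly the second factor equals $e^{-sb_Q}\cdot\frac1{|Q|}\int_Q e^{-s(b(y)-b_Q)}\,dy$. When we multiply the two factors, the $e^{\pm sb_Q}$ cancel, so
\[
[w]_{A_2} = \sup_Q \left(\frac1{|Q|}\int_Q e^{s(b(y)-b_Q)}\,dy\right)\left(\frac1{|Q|}\int_Q e^{-s(b(y)-b_Q)}\,dy\right).
\]
Now in each factor I bound $e^{\pm s(b(y)-b_Q)} \le e^{|s|\,|b(y)-b_Q|}$, and since $|s| \le \alpha_n/\|b\|_{BMO}$, the exponent is at most $\frac{\alpha_n}{\|b\|_{BMO}}|b(y)-b_Q|$. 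Applying \eqref{sharpJN} to each factor gives that each average is $\le \beta_n$, hence $[w]_{A_2} \le \beta_n^2 < \infty$; in particular $w \in A_2$.

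There is essentially no obstacle here — the only point requiring a moment's care is the cancellation of the $e^{\pm sb_Q}$ terms (so that one does not need any bound on $b_Q$ itself, only on the oscillation $b - b_Q$), and the observation that $e^{x}\le e^{|x|}$ lets us reduce to the absolute-value form of John--Nirenberg for both signs of $s$ simultaneously. One should also note at the start that $b \in BMO$ with $\|b\|_{BMO} = 0$ forces $b$ constant, so $w$ is a positive constant and trivially in $A_2$ with constant $1 \le \beta_n^2$; otherwise the division by $\|b\|_{BMO}$ is legitimate. This completes the argument.
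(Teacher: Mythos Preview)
Your proof is correct and follows essentially the same route as the paper: insert $b_Q$, bound each oscillation average $\frac{1}{|Q|}\int_Q e^{\pm s(b-b_Q)}$ by $\be_n$ via $e^{\pm s(b-b_Q)}\le e^{|s|\,|b-b_Q|}\le e^{(\al_n/\|b\|_{BMO})|b-b_Q|}$ and \eqref{sharpJN}, and multiply so that the $e^{\pm sb_Q}$ factors cancel. The only addition is your explicit treatment of the degenerate case $\|b\|_{BMO}=0$, which the paper omits.
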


\begin{proof} 

By Theorem \ref{sharpJ-N}, if $|s| \leq \frac{\al_n}{\|b\|_{BMO}}$
and if $Q$ is fixed
$$
\frac{1}{  |Q| } \int_{Q} \exp ( |s||b(y)- b_{Q}|  )\, dy \le
\frac{1}{  |Q| } \int_{Q} \exp ( \frac{\al_n}{\|b\|_{BMO}}|b(y)-
b_{Q}|  )\, dy \leq \be_n,
$$
thus
$$
\frac{1}{  |Q| } \int_{Q} \exp ( s(b(y)- b_{Q})  )\, dy \leq \be_n,
$$
and
$$
\frac{1}{  |Q| } \int_{Q} \exp ( -s(b(y)- b_{Q})  )\, dy \leq \be_n.
$$
If we multiply the inequalities, the $b_Q$ parts cancel out:
$$
\left (\frac{1}{  |Q| } \int_{Q} \exp ( s(b(y)- b_{Q})  )\, dy
\right )\,\left (\frac{1}{ |Q| } \int_{Q} \exp ( s(b_{Q}-b(y))  )\,
dy \right )
$$
$$
= \left (\frac{1}{  |Q| } \int_{Q} \exp ( sb(y)  )\, dy \right ) \,
\left( \frac{1}{  |Q| } \int_{Q} \exp ( -sb(y)  )\, dy\right )\leq
\be_n^2
$$
namely $e^{s\,b} \in A_2$ with
$$
[e^{s\,b}]_{A_2}\leq \be_n^2.
$$

\end{proof}

 We remark that it follows easily from minor modifications to the proof of
 Lemma~\ref{BMO&A2} that if $1<p<\infty$
$$
s\in  \rr, \q |s| \leq \frac{\al_n}{\|b\|_{BMO}  }\min
\left\{1,\frac{1}{p-1}\right\} \Longrightarrow e^{s\,b}\in A_p \q
\mbox{and} \q [e^{s\,b}]_{A_p}\leq \be_n^{p},
$$
where as usual
$$[w]_{A_p}= \sup_Q\left(\frac{1}{|Q|}\int_Qw(x)dx\right)\left(\frac{1}{|Q|}\int_Q
w(x)^{-1/(p-1)}dx\right)^{p-1}<\infty.$$

\subsection{Sharp reverse H\"older inequality for the $A_2$ class of weights}

Recall that if $w\in A_2$  then $w$ satisfies a reverse H\"older
condition, namely, there are constants $r>1$ and $c\ge1$ such that
for any cube $Q$
\begin{equation}\label{a2}
\left ( \frac{1}{|Q|}\int_Qw^{r}dx\right )^{\frac1r} \le
\frac{c}{|Q|}\int_Qw
\end{equation}
In the usual proofs, both constants, $c$ and $r$, depend upon the
$A_2$ constant of the weight. There is a more precise version of
\eqref{a2}.

\begin{lemma}\label{RHA2} Let $w\in A_2$ and let
$r_{w}=1+\frac{1}{2^{n+5}[w]_{A_2}}$. Then
$$\left(
\frac{1}{|Q|}\int_Qw^{r_w}dx\right )^{\frac{1}{r_w}} \le
\frac{2}{|Q|}\int_Qw $$
\end{lemma}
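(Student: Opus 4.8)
The plan is to run the classical Calderón–Zygmund / good-$\lambda$ argument that yields reverse Hölder, but to track the constants carefully so that the exponent $r_w$ can be taken as close to $1$ as $1+c_n/[w]_{A_2}$. Fix a cube $Q_0$ and, after normalizing, assume $\frac{1}{|Q_0|}\int_{Q_0}w=1$. First I would set up a Calderón–Zygmund stopping-time decomposition of $w$ on $Q_0$ at heights $\la^k$ for a large fixed $\la>1$ (to be chosen of size comparable to a dimensional constant, e.g. $\la=2^{n+1}$), producing at each level $k$ a family of maximal dyadic subcubes $\{Q^k_j\}$ on which the average of $w$ exceeds $\la^k$ but is at most $2^n\la^k$ by maximality. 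The key distributional estimate is the standard one: for the level sets $\Om_k=\bigcup_j Q^k_j$ one has $w(\Om_{k+1})\le \frac{2^n}{\la}\,w(\Om_k)$, which comes from summing the $A_2$ (really just $A_\infty$) comparison $w(Q^{k+1}_i\cap Q^k_j)\le C\,w(Q^k_j)\,\frac{|Q^{k+1}_i\cap Q^k_j|}{|Q^k_j|}$ over the descendants contained in a fixed $Q^k_j$ and using that $\sum_i|Q^{k+1}_i\cap Q^k_j|\le \frac{1}{\la}|Q^k_j|\cdot(\text{avg of }w)\le \frac{2^n}{\la}|Q^k_j|$. Here the only place the weight enters is through the $A_\infty$-type inequality $\frac{w(E)}{w(Q)}\le C\big(\frac{|E|}{|Q|}\big)^{\epsilon}$ with $\epsilon$ and $C$ controlled by $[w]_{A_2}$; in fact for $A_2$ one can use the elementary bound $\frac{|E|}{|Q|}\le \big([w]_{A_2}\frac{w(E)}{w(Q)}\big)^{1/2}$, i.e. $\frac{w(E)}{w(Q)}\ge [w]_{A_2}^{-1}\big(\frac{|E|}{|Q|}\big)^2$, which is immediate from Cauchy–Schwarz and the definition of $[w]_{A_2}$.

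Next I would convert the geometric decay of $w(\Om_k)$ into the reverse Hölder bound. Writing $w^{r_w}=w\cdot w^{r_w-1}$ and decomposing $Q_0$ into the "rings" $\Om_k\setminus\Om_{k+1}$ plus the part where $w\le \la$, on each ring $w\le 2^n\la^{k+1}$ pointwise-in-average sense is replaced by the honest pointwise bound $w<\la^{k+1}$ a.e. on $Q_0\setminus\Om_{k+1}$ (this is where one uses that the maximal function/averages control $w$ from the Lebesgue differentiation theorem on the complement of the stopping region). Then
$$
\int_{Q_0}w^{r_w}\,dx \le \la^{r_w}\,|Q_0| + \suml_{k\ge 1}\la^{(k+1)(r_w-1)}\,w(\Om_k\setminus \Om_{k+1}) \le \la^{r_w}|Q_0| + \la^{r_w-1}\suml_{k\ge 1}\big(\la^{r_w-1}\big)^{k}\Big(\tfrac{2^n}{\la}\Big)^{k}w(\Om_1).
$$
Choosing $\la$ a fixed dimensional constant bigger than, say, $2^{n+2}$, the ratio $\la^{r_w-1}\cdot 2^n/\la$ is at most $\tfrac12$ provided $\la^{r_w-1}\le 2^{n-1}$, i.e. $(r_w-1)\log\la\le (n-1)\log 2$; since $r_w-1=1/(2^{n+5}[w]_{A_2})$ and $[w]_{A_2}\ge 1$, this holds with room to spare. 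The geometric series then sums to something bounded, $w(\Om_1)\le w(Q_0)=|Q_0|$, and $\la^{r_w}\le \la^2$ is a dimensional constant, so altogether $\frac{1}{|Q_0|}\int_{Q_0}w^{r_w}\le C_n$. Taking the $r_w$-th root gives $\big(\frac{1}{|Q_0|}\int_{Q_0}w^{r_w}\big)^{1/r_w}\le C_n^{1/r_w}\le C_n$, and the final sharpening to constant $2$ on the right-hand side comes from choosing the numerical constants (the $2^{n+5}$ in $r_w$ is generous) so that $C_n^{1/r_w}\le 2$; concretely one checks $x^{1/r}\le 1+ \frac{(x-1)}{r}$-type estimates, or simply absorbs the excess by making the exponent gap $r_w-1$ small enough, which is exactly what the large constant $2^{n+5}$ buys.

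The main obstacle I anticipate is bookkeeping the constants so that the exponent comes out as $1+\frac{1}{2^{n+5}[w]_{A_2}}$ with the clean bound $2$, rather than merely "some $r>1$, some $C$". The delicate point is that the natural reverse-Hölder argument gives a bound of the form $\big(\fint_Q w^{r}\big)^{1/r}\le C(\lambda,n)\fint_Q w$ valid for all $r$ with $r-1\le c_n/[w]_{A_2}$, and one must then verify that the multiplicative constant, after taking the $1/r$-th root with $r$ this close to $1$, does not blow up — indeed it tends to its value at $r=1$, which is $1$, so there is plenty of slack to reach $2$; making this quantitative (e.g. via $\log$ and the inequality $e^t\le 1+2t$ for $t$ small) is the only real computation. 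A secondary subtlety, flagged implicitly by the remark that this is "claimed but never proved in the literature", is being careful that the $A_\infty$ decay exponent for an $A_2$ weight can genuinely be taken independent of $[w]_{A_2}$ (it is: the power $2$ above is universal), so that the only $[w]_{A_2}$-dependence sits in how small $r_w-1$ must be, which is precisely the content of the statement.
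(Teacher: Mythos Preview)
There is a genuine gap. Your entire argument rests on the geometric $w$-measure decay
\[
w(\Om_{k+1})\le \frac{2^n}{\la}\,w(\Om_k)
\]
with $\la$ a fixed \emph{dimensional} constant, and your justification for it is incorrect. You invoke an ``$A_\infty$ comparison'' $w(E)/w(Q)\le C\,|E|/|Q|$, but for $A_2$ weights the constant $C$ in any such upper bound necessarily depends on $[w]_{A_2}$; obtaining it with sharp dependence is in fact equivalent to the reverse H\"older inequality you are trying to prove. The Cauchy--Schwarz estimate you then cite,
\[
\Big(\frac{|E|}{|Q|}\Big)^2\le [w]_{A_2}\,\frac{w(E)}{w(Q)},
\]
goes the \emph{wrong} direction: it is a lower bound on $w(E)/w(Q)$, not an upper bound, so it cannot deliver decay of $w(\Om_{k+1})$. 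In fact no dimensional decay rate exists: for $w(x)=|x|^{-1+\ez}$ on $[-1,1]$ (so $[w]_{A_2}\sim 1/\ez$) one computes $w(\{w>\la_0^{k+1}\})/w(\{w>\la_0^{k}\})=\la_0^{-\ez/(1-\ez)}\to 1$ as $\ez\to 0$, regardless of the dimensional choice of $\la_0$. So the series you write down need not converge with your choice of $\la$, and the place where $[w]_{A_2}$ must enter the exponent has been lost.

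The paper's proof uses the same Cauchy--Schwarz inequality, but in the correct direction: it shows $|\{x\in Q: w(x)\le w_Q/(2[w]_{A_2})\}|\le \tfrac12|Q|$, and combines this with a single Calder\'on--Zygmund decomposition at each level $\la>w_Q$ to get the \emph{level-shifted} comparison
\[
w(\{x\in Q:w>\la\})\le 2^{n+1}\la\,\big|\{x\in Q: w>\la/(2[w]_{A_2})\}\big|.
\]
The factor $[w]_{A_2}$ appears explicitly in the shift $\la\mapsto \la/(2[w]_{A_2})$; after inserting this into the layer-cake formula for $\int_Q w^{1+\de}$ and changing variables, one obtains
\[
II\le (2[w]_{A_2})^{1+\de}\,2^{n+1}\,\frac{\de}{1+\de}\,\frac{1}{|Q|}\int_Q w^{1+\de},
\]
and now choosing $\de=1/(2^{n+5}[w]_{A_2})$ makes this coefficient at most $1/2$, allowing absorption into the left side. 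The crucial point is that the $[w]_{A_2}$ dependence sits in the level shift, which after the change of variables produces the factor $(2[w]_{A_2})^{1+\de}\de$; it is precisely this product being $O(1)$ for $\de\sim 1/[w]_{A_2}$ that yields the sharp exponent. Your scheme tried to push the $[w]_{A_2}$ dependence into a decay rate that you then asserted was dimensional, which is exactly what fails.
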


This result was stated and used in \cite{B} but no proof was given.
The author mentioned instead the celebrated work \cite{CF} where no
explicit statement can be found. We supply  in Section
\ref{append} a proof taken from \cite{P5}, where a more general
version can be found as well as more information.


\section{Main result}\label{proofmainresult}

\begin{theorem}\label{main}
Let $T$ be a linear operator bounded on $L^2(w)$ for any $w\in A_2$.
Suppose further that there is an increasing function  $\vp:[1,\infty)\to
[0,\infty)$ such that
\begin{equation}
\|T\|_{L^{2}(w)} \le \vp([w]_{A_2}).
\end{equation}
then there are constants $\gamma _n$ and $c_n$ independent of
\,$[w]_{A_2}$ such that
\begin{equation}\label{sharpcommA2}
\|[b,T]\|_{L^{2}(w)} \le c_n\, \vp(\gamma_n\,[w]_{A_2})\,[w]_{A_2}
\|b\|_{BMO}.
\end{equation}
\end{theorem}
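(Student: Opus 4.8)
The plan is to exploit the Coifman--Rochberg--Weiss idea of realizing the commutator via complex differentiation of a conjugated operator. For $z\in\mathbb C$ set $T_z(f)=e^{zb}\,T(e^{-zb}f)$. A direct computation shows that the map $z\mapsto T_z$ is analytic (in a suitable sense) and that $\frac{d}{dz}T_z(f)\big|_{z=0}=bT(f)-T(bf)=[b,T](f)$. Hence, by the Cauchy integral formula, for any radius $\varepsilon>0$,
$$
[b,T](f)=\frac{1}{2\pi i}\int_{|z|=\varepsilon}\frac{T_z(f)}{z^2}\,dz,
$$
so that
$$
\|[b,T]f\|_{L^2(w)}\le\frac{1}{\varepsilon}\,\sup_{|z|=\varepsilon}\|T_z(f)\|_{L^2(w)}.
$$

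The next step is to bound $\|T_z f\|_{L^2(w)}$ for $z$ on a small circle. Writing $z=s+it$ with $|z|=\varepsilon$, one has
$$
\|T_z f\|_{L^2(w)}^2=\int_{\rn}|T(e^{-zb}f)|^2 e^{2sb}\,w\,dx
=\|T(e^{-zb}f)\|_{L^2(we^{2sb})}^2,
$$
and since $|e^{-zb}f|=e^{-sb}|f|$,
$$
\|T_z f\|_{L^2(w)}\le\|T\|_{L^2(we^{2sb})}\,\big\||f|\big\|_{L^2(we^{-2sb}\cdot e^{2sb})}
=\|T\|_{L^2(we^{2sb})}\,\|f\|_{L^2(w)}.
$$
Wait — more carefully, $\|T(e^{-zb}f)\|_{L^2(we^{2sb})}\le \|T\|_{L^2(we^{2sb})}\|e^{-zb}f\|_{L^2(we^{2sb})}$ and $\|e^{-zb}f\|^2_{L^2(we^{2sb})}=\int |f|^2 e^{-2sb}e^{2sb}w=\|f\|^2_{L^2(w)}$. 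So indeed $\|T_z\|_{L^2(w)}\le\|T\|_{L^2(we^{2sb})}\le\vp([we^{2sb}]_{A_2})$, provided $we^{2sb}\in A_2$ with a controlled constant. This is exactly where Lemma~\ref{BMO&A2} enters: if $|2s|\le\alpha_n/\|b\|_{BMO}$, then $e^{2sb}\in A_2$ with $[e^{2sb}]_{A_2}\le\beta_n^2$; combined with the elementary submultiplicativity-type estimate $[w_1w_2]_{A_2}\le[w_1^2]_{A_2}^{1/2}[w_2^2]_{A_2}^{1/2}$ (Cauchy--Schwarz), and the sharp reverse H\"older inequality Lemma~\ref{RHA2} to control $[w^2]_{A_2}$ by $C_n[w]_{A_2}^2$ when $|s|$ is further restricted by a multiple of $1/[w]_{A_2}$, one gets $[we^{2sb}]_{A_2}\le\gamma_n[w]_{A_2}$ for all $z$ on the circle of radius
$$
\varepsilon:=c_n\,\min\!\Big\{\frac{1}{\|b\|_{BMO}},\ \frac{1}{[w]_{A_2}\,\|b\|_{BMO}}\Big\}
=\frac{c_n}{[w]_{A_2}\,\|b\|_{BMO}}.
$$

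Plugging this choice of $\varepsilon$ into the Cauchy-formula bound gives
$$
\|[b,T]f\|_{L^2(w)}\le\frac{1}{\varepsilon}\,\vp(\gamma_n[w]_{A_2})\,\|f\|_{L^2(w)}
=c_n\,\vp(\gamma_n[w]_{A_2})\,[w]_{A_2}\,\|b\|_{BMO}\,\|f\|_{L^2(w)},
$$
which is precisely \eqref{sharpcommA2}. The main obstacle, and the step requiring the most care, is the quantitative control of the $A_2$ constant of the perturbed weight $we^{2sb}$: one needs the factor $e^{2sb}$ to contribute only a dimensional constant (handled by the sharp John--Nirenberg estimate via Lemma~\ref{BMO&A2}) while ensuring that raising $w$ to a power slightly larger than $1$ — which is forced by the Cauchy--Schwarz splitting into $[w^2]_{A_2}^{1/2}$ — costs only a bounded multiple of $[w]_{A_2}$, which is exactly the content of the sharp reverse H\"older Lemma~\ref{RHA2} and is the reason the radius $\varepsilon$ must scale like $1/[w]_{A_2}$, producing the extra linear factor $[w]_{A_2}$ in the final bound. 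A secondary technical point is justifying the contour-integral representation rigorously (analyticity of $z\mapsto T_z f$ as an $L^2(w)$-valued map, which follows from dominated convergence once one knows $\sup_{|z|\le\varepsilon}\|T_z f\|_{L^2(w)}<\infty$, already established above).
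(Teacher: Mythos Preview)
Your overall architecture is exactly that of the paper: realize $[b,T]$ via the Cauchy integral formula applied to $T_z f = e^{zb}T(e^{-zb}f)$, reduce to controlling $\|T\|_{L^2(we^{2sb})}$ uniformly for $|s|\le\varepsilon$, and choose $\varepsilon\sim ( [w]_{A_2}\|b\|_{BMO})^{-1}$ so that the $1/\varepsilon$ factor produces the extra $[w]_{A_2}\|b\|_{BMO}$. The reduction $\|T_zf\|_{L^2(w)}=\|T(e^{-zb}f)\|_{L^2(we^{2sb})}\le \vp([we^{2sb}]_{A_2})\|f\|_{L^2(w)}$ is also correct.

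The gap is in your handling of $[we^{2sb}]_{A_2}$. You invoke the Cauchy--Schwarz splitting $[w_1w_2]_{A_2}\le[w_1^2]_{A_2}^{1/2}[w_2^2]_{A_2}^{1/2}$ and then claim that Lemma~\ref{RHA2} controls $[w^2]_{A_2}$ by $C_n[w]_{A_2}^2$. This fails: $w\in A_2$ does \emph{not} imply $w^2\in A_2$ (e.g.\ $w(x)=|x|^\alpha$ is in $A_2$ for $|\alpha|<1$ but $w^2$ is only in $A_2$ for $|\alpha|<1/2$), and the sharp reverse H\"older inequality only gives control of $w^{r_w}$ with $r_w=1+\frac{1}{2^{n+5}[w]_{A_2}}$, far short of the power $2$ that Cauchy--Schwarz would require. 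Your own parenthetical ``raising $w$ to a power slightly larger than $1$'' is the right intuition, but it is incompatible with the Cauchy--Schwarz step you wrote down.

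What the paper does instead is use H\"older's inequality with the specific exponent $r=r_w$ furnished by Lemma~\ref{RHA2}:
\[
\frac{1}{|Q|}\int_Q w\,e^{2sb}\le\Big(\frac{1}{|Q|}\int_Q w^{r}\Big)^{1/r}\Big(\frac{1}{|Q|}\int_Q e^{2sr'b}\Big)^{1/r'}\le \frac{2}{|Q|}\int_Q w\cdot\Big(\frac{1}{|Q|}\int_Q e^{2sr'b}\Big)^{1/r'},
\]
and similarly for $w^{-1}e^{-2sb}$, yielding $[we^{2sb}]_{A_2}\le 4[w]_{A_2}\,[e^{2sr'b}]_{A_2}^{1/r'}$. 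The point is that the weight $w$ is raised only to the harmless power $r_w$, while the whole burden is shifted onto the exponential factor through the \emph{conjugate} exponent $r'=1+2^{n+5}[w]_{A_2}\approx 2^n[w]_{A_2}$. Lemma~\ref{BMO&A2} then applies to $e^{2sr'b}$ provided $|2sr'|\le\alpha_n/\|b\|_{BMO}$, which forces $|s|\lesssim ( [w]_{A_2}\|b\|_{BMO})^{-1}$ and explains precisely where the factor $[w]_{A_2}$ in the radius comes from. Once you make this correction, your proof coincides with the paper's.
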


For the particular case $\vp (t)=c_0t^r$, where $r>0$, and $c_0> 0$,
a simple induction argument shows that if
\[ \|T\|_{L^2(w)}\leq a_0\, [w]_{A_2}^r,\]
then for each integer $k\geq 1$ there is a constant $a_k$ depending
on $k$, the initial value $a_0$, and the parameters $\gamma_n$ and
$c_n$ in the theorem, such that the $k$th-order commutator $T^k_b$
defined recursively by $T_b^{k} := [b, T_b^{k-1}]$, obeys the
following weighted estimates
\[\|T_b^k\|_{L^2(w)} \leq a_k \,[w]_{A_2}^{r+k} \,\|b\|_{BMO}^k.\]
More precisely,  the sequence $\{a_k\}_{k\geq 0}$ obeys the
following recurrence equation that can be solved easily,
\[  a_k=c_n\,a_{k-1}\,\gamma_n^{r+k-1}= c_n^k\,a_0\,\gamma_n^{kr+\frac{(k-2)(k-1)}{2}}.\]
Using the method of proof of  Theorem~\ref{main} we can obtain a
weighted estimate for the $k$th-order commutator that works for general
increasing function $\vp:[1,\infty)\to [0,\infty)$. Notice  the
difference in the constants with  what we just argued by induction
for the particular case $\vp(t)=a_0t^r$: in the corollary  the
constant is $c_n^k\,a_0\,\gamma_n^r\,k!$,  whereas in the induction
argument the constant is
$c_n^k\,a_0\,\gamma_n^{kr+\frac{(k-2)(k-1)}{2}}$.

\begin{corollary}\label{COR1}
Let $T$ be a bounded linear operator on $L^2(w)$ with $w\in A_2$ and
\begin{equation}
\|T\|_{L^{2}(w)} \le \vp([w]_{A_2}).
\end{equation}
then there are constants $\gamma_n$ and $c_n$ independent of
\,$[w]_{A_2}$ such that
\begin{equation}\label{sharpgener.commA2}
\|T^k_b\|_{L^{2}(w)} \le c_n^k\, k! \, \vp(\gamma_n
[w]_{A_2})\,[w]_{A_2}^{k} \|b\|_{BMO}^k.
\end{equation}
\end{corollary}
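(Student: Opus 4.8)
\textbf{Proof proposal for Corollary~\ref{COR1}.}

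The plan is to use the classical Coifman--Rochberg--Weiss conjugation trick, which is the engine behind Theorem~\ref{main}, but iterated $k$ times. For a complex number $z$ and $b\in BMO$, set $T_z f = e^{zb}\,T(e^{-zb}f)$; this is a well-defined bounded operator on $L^2(w)$ whenever $e^{-2\,\mathrm{Re}(z)\,b}w$ still lies in $A_2$, because $T_z$ is conjugate to $T$ by a multiplication. Differentiating in $z$ produces the commutators: one checks that $T_b f = \frac{d}{dz}T_z f\big|_{z=0}$, and more generally the $k$th-order commutator $T^k_b$ arises (up to the factorial) as the $k$th derivative at $z=0$. The Cauchy integral formula then gives, for any admissible radius $\varepsilon>0$,
\begin{equation*}
T^k_b f = \frac{k!}{2\pi i}\int_{|z|=\varepsilon} \frac{T_z f}{z^{k+1}}\,dz,
\end{equation*}
so that
\begin{equation*}
\|T^k_b f\|_{L^2(w)} \le \frac{k!}{\varepsilon^k}\,\sup_{|z|=\varepsilon}\|T_z f\|_{L^2(w)} = \frac{k!}{\varepsilon^k}\,\sup_{|z|=\varepsilon}\|T(e^{-zb}f)\,e^{zb}\|_{L^2(w)}.
\end{equation*}

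The next step is to bound $\|T(e^{-zb}g)\,e^{zb}\|_{L^2(w)}$ uniformly for $|z|=\varepsilon$, where I will eventually take $g = f$. Writing $z = \varepsilon e^{i\theta}$ and $\mathrm{Re}(z) = \varepsilon\cos\theta$, one has $|e^{zb}|^2 w = e^{2\varepsilon\cos\theta\, b}w =: w_\theta$, and the operator $T$ acting with this weight has norm at most $\vp([w_\theta]_{A_2})$ by hypothesis. The key is therefore to control $[w_\theta]_{A_2}$ in terms of $[w]_{A_2}$. This is exactly where Lemma~\ref{BMO&A2} enters: if we choose $\varepsilon = \frac{\al_n}{2\|b\|_{BMO}}$ (so that $2\varepsilon|\cos\theta|\le 2\varepsilon \le \al_n/\|b\|_{BMO}$), then $e^{2\varepsilon\cos\theta\, b}\in A_2$ with constant at most $\be_n^2$, and by the standard product estimate for $A_2$ constants (if $u,v\in A_2$ then... more carefully, since $w$ and $w_\theta/w = e^{2\varepsilon\cos\theta\, b}$ are both $A_2$, one uses a sharp reverse H\"older argument) $[w_\theta]_{A_2} \le \gamma_n [w]_{A_2}$ for a dimensional $\gamma_n$. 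Here Lemma~\ref{RHA2} is the natural tool: write $w_\theta = w\cdot e^{2\varepsilon\cos\theta\, b}$, apply H\"older with the reverse-H\"older exponent $r_w = 1 + \tfrac{1}{2^{n+5}[w]_{A_2}}$ of $w$ to the factor $w$ and the conjugate exponent to the bounded factor $e^{2\varepsilon\cos\theta\, b}$, whose high integrability is guaranteed by a John--Nirenberg-type bound (Theorem~\ref{sharpJ-N}) provided $\varepsilon$ is taken a further fixed fraction smaller if necessary; the upshot is a clean $[w_\theta]_{A_2}\le \gamma_n [w]_{A_2}$ with $\gamma_n$ dimensional.

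Combining, for $|z| = \varepsilon = c_n'/\|b\|_{BMO}$ we get $\|T(e^{-zb}f)\,e^{zb}\|_{L^2(w)} \le \vp(\gamma_n[w]_{A_2})\,\|e^{-zb}f\|_{L^2(w_\theta)}$; but $\|e^{-zb}f\|_{L^2(w_\theta)}^2 = \int |e^{-zb}f|^2 e^{2\varepsilon\cos\theta\, b}w = \int |f|^2 w = \|f\|_{L^2(w)}^2$ since the exponential factors cancel exactly. Hence
\begin{equation*}
\|T^k_b f\|_{L^2(w)} \le \frac{k!}{\varepsilon^k}\,\vp(\gamma_n[w]_{A_2})\,\|f\|_{L^2(w)} = k!\,\Big(\tfrac{\|b\|_{BMO}}{c_n'}\Big)^k\,\vp(\gamma_n[w]_{A_2})\,\|f\|_{L^2(w)},
\end{equation*}
which is the claimed bound with $c_n = 1/c_n'$, after absorbing the missing $[w]_{A_2}^k$ factor — note $[w]_{A_2}\ge 1$, so if one wants the stated form with $[w]_{A_2}^k$ explicitly present it is even weaker and follows a fortiori. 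The main obstacle, and the only place real work is needed, is the uniform $A_2$ estimate $[w_\theta]_{A_2}\le\gamma_n[w]_{A_2}$ valid for all $|z|=\varepsilon$ simultaneously with $\varepsilon$ a fixed multiple of $1/\|b\|_{BMO}$: one must choose the radius small enough (a dimensional fraction of $\al_n/\|b\|_{BMO}$) that the perturbing weight $e^{2\mathrm{Re}(z)b}$ lies deep inside $A_\infty$ with controlled reverse-H\"older exponent, and then run the H\"older/reverse-H\"older splitting carefully so that the constant $\gamma_n$ does not secretly depend on $[w]_{A_2}$. Everything else — the Cauchy integral representation, the cancellation of exponentials in the weighted norm, the passage from $k=1$ (Theorem~\ref{main}) to general $k$ — is bookkeeping. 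Finally, the $L^p(w)$ versions follow by the sharp Rubio de Francia extrapolation theorem of \cite{DGPerPet}, exactly as asserted in the introduction.
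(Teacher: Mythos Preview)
Your overall architecture is exactly that of the paper: conjugate $T$ to $T_z=e^{zb}T(e^{-zb}\cdot)$, use the Cauchy integral formula for the $k$th derivative to pick up the $k!/\varepsilon^k$, and then control $[we^{2\mathrm{Re}(z)b}]_{A_2}$ via reverse H\"older on $w$ together with John--Nirenberg on the exponential factor. But there is a genuine gap at the point you yourself flag as ``the main obstacle'': you claim the radius can be taken as a \emph{dimensional} fraction of $\al_n/\|b\|_{BMO}$, independent of $[w]_{A_2}$, and that the $[w]_{A_2}^k$ in the statement can then be inserted for free because $[w]_{A_2}\ge 1$. This is not so, and in fact contradicts the sharpness examples of Section~\ref{Examples}.

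The reason is that in the H\"older/reverse-H\"older splitting you describe, the conjugate exponent to $r_w=1+\tfrac{1}{2^{n+5}[w]_{A_2}}$ is $r_w'=1+2^{n+5}[w]_{A_2}$, which is of order $[w]_{A_2}$. To invoke Lemma~\ref{BMO&A2} (or Theorem~\ref{sharpJ-N}) on the factor $e^{2\mathrm{Re}(z)\,r_w' b}$ you need $|2\mathrm{Re}(z)\,r_w'|\le \al_n/\|b\|_{BMO}$, which forces
\[
\varepsilon \le \frac{\al_n}{2r_w'\|b\|_{BMO}} \approx \frac{c_n}{[w]_{A_2}\,\|b\|_{BMO}},
\]
not merely $\varepsilon\le c_n/\|b\|_{BMO}$. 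With this correct choice one gets $1/\varepsilon^k \approx c_n^k\,[w]_{A_2}^k\,\|b\|_{BMO}^k$, and the $[w]_{A_2}^k$ in \eqref{sharpgener.commA2} appears exactly here; it is not an afterthought. Concretely, your $[w]_{A_2}$-free choice of $\varepsilon$ fails already for $n=1$, $b(x)=\log|x|$, $w(x)=|x|^{1-\delta}$: with $\varepsilon$ fixed and $\delta\to 0$ (so $[w]_{A_2}\to\infty$) the weight $w_\theta=|x|^{1-\delta+2\varepsilon\cos\theta}$ leaves $A_2$ as soon as $2\varepsilon\cos\theta\ge\delta$, so no uniform estimate $[w_\theta]_{A_2}\le\gamma_n[w]_{A_2}$ is possible. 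Once you let $\varepsilon$ depend on $[w]_{A_2}$ as above, your argument becomes precisely the paper's.
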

The constants $\gamma_n$ and $c_n$ that appear in Corollary~\ref{COR1} are the same that
appeared in Theorem~\ref{main}.
We first present the proof of Theorem~\ref{main}, and afterwards
 we discuss the necessary modifications to obtain Corollary~\ref{COR1}.
As an easy consequence of Corollary \ref{COR1} and the Rubio de
Francia extrapolation theorem with sharp constants \cite{DGPerPet}, we
have the following.
\begin{corollary}\label{COR2}
Let $T$ be a linear operator bounded on $L^2(w)$ with $w\in A_2$ and
\begin{equation}
\|T\|_{L^{2}(w)} \le \vp([w]_{A_2}).
\end{equation}
Then, for $1<p<\infty$, there are constants $\gamma_{n,p}$,  and
$c_{n,p}$,  which only depend on $p$, and the dimension $n$, such that
for all weights $w\in A_p\,$
\begin{equation}\label{sharpgener.commAp}
\|T^k_b\|_{L^{p}(w)} \le \sqrt{2}\,c_{n,p}^k\,k!\, \vp\left
(\gamma_{n,p}\,[w]_{A_p}^{\max\{1,\frac{1}{p-1}\}}\right
)\,[w]_{A_p}^{k\max\{1,\frac{1}{p-1}\}} \|b\|_{BMO}^k.
\end{equation}
\end{corollary}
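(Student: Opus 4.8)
The plan is to deduce Corollary~\ref{COR2} from Corollary~\ref{COR1} by a direct application of the sharp Rubio de Francia extrapolation theorem of \cite{DGPerPet}. Recall that sharp extrapolation says: if a sublinear operator $S$ satisfies $\|S\|_{L^2(w)}\le \Phi([w]_{A_2})$ for all $w\in A_2$ with $\Phi$ increasing, then for every $1<p<\infty$ and every $w\in A_p$ one has $\|S\|_{L^p(w)}\le \sqrt{2}\,\Phi\big(c_{n,p}[w]_{A_p}^{\max\{1,1/(p-1)\}}\big)$. So the first step is to fix $b\in BMO$ and $k\ge 1$ and set $S=T^k_b$. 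By Corollary~\ref{COR1} we have, for all $w\in A_2$,
\[
\|T^k_b\|_{L^2(w)}\le c_n^k\,k!\,\|b\|_{BMO}^k\;\vp(\gamma_n[w]_{A_2})\,[w]_{A_2}^k.
\]
Thus the hypothesis of sharp extrapolation holds with the increasing function $\Phi(t):=c_n^k\,k!\,\|b\|_{BMO}^k\,\vp(\gamma_n t)\,t^k$ (this is increasing in $t$ because $\vp$ is increasing, $t^k$ is increasing, and the prefactor is a fixed positive constant once $b$ and $k$ are fixed).

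Second, I would simply plug this $\Phi$ into the extrapolation conclusion. For $1<p<\infty$ and $w\in A_p$, writing $\rho:=\max\{1,1/(p-1)\}$ and letting $c_{n,p}$ be the extrapolation constant, we get
\[
\|T^k_b\|_{L^p(w)}\le \sqrt{2}\,\Phi\big(c_{n,p}\,[w]_{A_p}^{\rho}\big)
=\sqrt{2}\,c_n^k\,k!\,\|b\|_{BMO}^k\;\vp\big(\gamma_n c_{n,p}\,[w]_{A_p}^{\rho}\big)\,\big(c_{n,p}\,[w]_{A_p}^{\rho}\big)^k.
\]
Absorbing $c_{n,p}^k$ into the constant (rename $c_n c_{n,p}\rightsquigarrow c_{n,p}$) and setting $\gamma_{n,p}:=\gamma_n c_{n,p}$, this is exactly \eqref{sharpgener.commAp}. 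One small bookkeeping point worth spelling out: extrapolation must be applied with the dependence on $b$ and $k$ frozen inside the constant, so that the "operator" being extrapolated has an $L^2(w)$ bound of the pure form $\Phi([w]_{A_2})$; since $T^k_b$ is genuinely linear (hence sublinear) for fixed $b$, this is legitimate.

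The only real subtlety — and the thing I would be careful to state precisely rather than grind through — is that the version of extrapolation in \cite{DGPerPet} is formulated for an increasing function of the $A_2$ characteristic and produces the factor $\sqrt{2}$ together with the substitution $[w]_{A_2}\mapsto c_{n,p}[w]_{A_p}^{\max\{1,1/(p-1)\}}$; one must check that $\Phi$ as defined above indeed meets the monotonicity requirement and that the resulting constant $c_{n,p}$ depends only on $n$ and $p$ (not on $k$ or $b$), which it does since it comes from the extrapolation machinery applied to the fixed exponent pair $(2,p)$. Everything else is routine algebraic rearrangement of constants. So there is essentially no obstacle here beyond correctly invoking the cited sharp extrapolation theorem; the corollary is a formal consequence of Corollary~\ref{COR1}.
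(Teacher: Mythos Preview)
Your proposal is correct and follows exactly the approach the paper intends: the paper states that Corollary~\ref{COR2} is ``an easy consequence of Corollary~\ref{COR1} and the Rubio de Francia extrapolation theorem with sharp constants \cite{DGPerPet}'' and gives no further details, so your argument (freeze $b$ and $k$, set $\Phi(t)=c_n^k k!\|b\|_{BMO}^k\vp(\gamma_n t)t^k$, apply sharp extrapolation, and relabel constants) is precisely the intended proof spelled out.
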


In the particular case $\vp (t)=a_0t^r$ the extrapolated estimate
looks like
\begin{equation}\label{extrapolationpowervarphi}
\|T^k_b\|_{L^{p}(w)} \le \sqrt{2}\, a_0\,c_{n}^k\, k!\,
\gamma_{n}^r\, c_p^{r+k}[w]_{A_p}^{(r+k)\max\{1,\frac{1}{p-1}\}}
\|b\|_{BMO}^k.
\end{equation}
where $c_p$ depends only on $p$, $\gamma_n$ and $c_n$ are the
constants that appeared in Theorem~\ref{main}.

We will show in Section~\ref{Examples} that for $r=1$, $\vp(t)=a_0t$
the  power $(1+k)\max\{1,\frac{1}{p-1}\}$ cannot be decreased for
$T=H$ and $T=R_j$ the Hilbert and Riesz transforms, for all $k\geq
1$ and $p>1$, therefore the theorem is optimal in terms of the rate
of the dependence on $[w]_{A_p}$. In \cite{Ch}, examples for $k=1$,
for all $p>1$, and for $T$ the Hilbert, Beurling and Riesz
transforms, were presented.

\begin{proof}[Proof of Theorem \ref{main}]


We ``conjugate'' the operator as follows: if $z$ is any complex
number we define
$$T_z(f)=e^{zb} T(e^{-zb}f).$$
Then, a computation gives (for instance for "nice" functions),
$$
[b,T](f)=\frac{d}{dz}T_z(f)|_{z=0}=\frac{1}{2\pi i}\int_{|z|=\ez}
\frac{T_z(f)}{z^2}\,dz\, , \qq \ez>0$$
by the Cauchy integral theorem,  see \cite{CRW}, \cite{ABKP}.

Now, by Minkowski's inequality
\begin{equation}\label{minkows}
\|[b,T](f)\|_{L^2(w)}\leq \frac{1}{2\pi\,\ez^2} \,\int_{|z|=\ez}
\|T_z(f)\|_{L^2(w)}|dz| \qq \ez>0.
\end{equation}

The key point is to find the appropriate radius $\ez$. To do this we
look at the inner norm \, $\|T_z(f)\|_{L^2(w)}$\,
$$
\|T_z(f)\|_{L^2(w)}=\|T(e^{-zb}f)\|_{L^2 (we^{2Re z\,b })} \, ,$$
and try to find appropriate bounds on $z$. To do this we use the
main hypothesis, namely that $T$ is bounded on $L^2(w)$ if $w\in
A_2$ with
$$
\|T\|_{L^{2}(w)} \le \vp([w]_{A_2}). $$
Hence we should compute
$$
[we^{2Re z\,b }]_{A_2}= \sup_Q \left (\frac1{|Q|}\int_Q we^{2Re
z\,b(x) }\,dx \right )\left (\frac1{|Q|}\int_Q w^{-1}e^{-2Re z\,b(x)
}\,dx\right ).
$$
Now, since $w\in A_2$ we use Lemma \ref{RHA2}: if \,
$r=r_{w}=1+\frac{1}{2^{n+5}[w]_{A_2}}<2\;\;$ then
$$
\left ( \frac{1}{|Q|}\int_Qw^{r}dx\right )^{\frac{1}{r}} \le
\frac{2}{|Q|}\int_Qw\, , $$
and similarly for $w^{-1}$ since $r_{w}=r_{w^{-1}}$,
$$
\left ( \frac{1}{|Q|}\int_Qw^{-r}dx\right )^{\frac{1}{r}} \le
\frac{2}{|Q|}\int_Qw^{-1}\, .
$$
Using this and Holder's inequality we have for an arbitrary $Q$
$$
\left (\frac1{|Q|}\int_Q w(x)e^{2Re z\,b(x) }\,dx \right )\left
(\frac1{|Q|}\int_Q w(x)^{-1}e^{-2Re z\,b(x) }\,dx \right )\leq
$$
$$
\left ( \frac{1}{|Q|}\int_Qw^{r}dx\right )^{\frac{1}{r}} \left (
\frac1{|Q|}\int_Q e^{2Re z\,r'\,b(x) }\,dx\right )^{\frac{1}{r'}}
\left ( \frac{1}{|Q|}\int_Qw^{-r}dx\right )^{\frac{1}{r}} \left (
\frac1{|Q|}\int_Q e^{-2Re z\,r'\,b(x) }\,dx\right )^{\frac{1}{r'}}
$$
$$
\leq 4\,\left (\frac{1}{|Q|}\int_Qw\,dx \right )\left
(\frac{1}{|Q|}\,\int_Qw^{-1}\,dx \right )\left (\frac1{|Q|}\int_Q
e^{2Re z\,r'\,b(x) }\,dx\right )^{\frac{1}{r'}}  \left (
\frac1{|Q|}\int_Q e^{-2Re z\,r'\,b(x) }\,dx\right )^{\frac{1}{r'}}
$$
$$
\leq 4\,[w]_{A_2}\,[e^{2Re z\,r'\,b}]_{A_2}^{\frac{1}{r'}}
$$

Now, since $b\in BMO$ we are in a position to apply Lemma
\ref{BMO&A2},
$$
\mbox{if} \q |2Re z\,r'| \leq \frac{\al_n}{\|b\|_{BMO}}
 \qq \mbox{then} \qq [e^{2Re z\,r'\,b}]_{A_2}\leq \be_n^2.
$$
Hence for these $z$ , and since $1<r<2$,
$$
[we^{2Re z\,b}]_{A_2}\leq 4\,[w]_{A_2}\, \be_n^{\frac{2}{r'}}\leq
4\,[w]_{A_2}\, \be_n .
$$

Using this estimate  for these $z$, and observing that
$\|e^{-zb}f\|_{L^2(we^{2Rezb})}=\|f\|_{L^2(w)}$,
$$
\|T_z(f)\|_{L^2(w)}=\|T(e^{-zb}f)\|_{L^2 (we^{2Re z\,b })}\leq
\vp([we^{2Re z\,b}]_{A_2}) \|f\|_{L^2(w)} \leq \vp(4[w]_{A_2}\,
\be_n)\,
 \|f\|_{L^2(w)} .
$$

Choosing now the radius
$$\ez= \frac{\al_n}{2r'\|b\|_{BMO}}  ,$$
we can continue estimating the norm in (\ref{minkows})
$$\|[b,T](f)\|_{L^2(w)}\leq \frac{1}{2\pi\,\ez^2}
\,\int_{|z|=\ez} \|T_z(f)\|_{L^2(w)}|dz|
$$
$$
\leq \frac{1}{2\pi\,\ez^2} \,\int_{|z|=\ez} \vp(4[w]_{A_2}\,
\be_n)\,
 \|f\|_{L^2(w)} |dz|
= \frac{1}{\ez}\,\vp(4[w]_{A_2}\, \be_n)\,\|f\|_{L^2(w)} ,
$$
since \,
$$|2Re z\,r'|\leq 2|z|\,r' = 2\ez\,r'=
 \frac{\al_n}{\|b\|_{BMO}}.$$
Finally, for this \,$\ez$,
$$\|[b,T](f)\|_{L^2(w)}\leq  C2^{2n}\,\vp(4[w]_{A_2}\, \be_n)\,[w]_{A_2}\,\|b\|_{BMO},
$$
because $r'=1+2^{n+5}[w]_{2}\approx 2^n[w]_{2}$, and
$\alpha_n=\frac{1}{2^{n+2}}$.

Observe that the optimal radius is essentially the inverse of
$[w]_{2}\|b\|_{BMO}$.  This proves the theorem with $c_n\sim
2^{2n}$ and $\gamma_n=4\,\be_n$.

\end{proof}

\begin{proof}[Proof of Corollary~\ref{COR1}]

In this case a computation gives (for instance for "nice"
functions), see \cite{ABKP} for example or the original paper
\cite{CRW},
$$
T_b^k(f)=\frac{d^k}{dz^k}T_z(f)|_{z=0}=\frac{k!}{2\pi
i}\int_{|z|=\ez} \frac{T_z(f)}{z^{k+1}}\,dz \qq \ez>0$$
by the Cauchy integral theorem. The same calculation as in the case
$k=1$ gives the required estimate, with $c_n^k\approx 2^{2nk}$, and
the same $\gamma_n= 4\beta_n$.

\end{proof}


\section{Examples}  \label{Examples}

In this section, we show that one can not have estimates better than
Theorem \ref{main}, Corollary \ref{COR1}, and Corollary \ref{COR2}.
We present  examples which return the same growth with respect to the
$A_p$ constant of the weight that appears in our results.
First, we discuss the
simpler case in dimension one. The following example shows that the
quadratic estimate  for the first commutator of the Hilbert
transform is sharp for $p=2$.

\subsection{Sharp example for the commutator of the Hilbert transform.}

Consider the Hilbert transform
\[
Hf(x) = p.v.\,\int_{\rr} \frac{f(y)}{x-y}\,dy,
\]
and consider the $BMO$ function $b(x)=\log |x|$. We know that there
is a constant $c$ such that
\begin{equation}
\|[b,H]\|_{L^2(w)} \leq\,c\, [w]_{A_2}^2
\end{equation}
and we show that the result is sharp. More precisely, for any increasing function
$\phi: [1,\infty)\to [0,\infty)$ such that $\displaystyle{\lim_{t\to\infty}\frac{t^2}{\phi (t)}=\infty}$ then
\begin{equation}\label{main1}
\sup_{w\in A_2} \frac{1}{\phi([w]_2)} \|[b,T]\|_{L^2(w)}
=\infty\,.
\end{equation}
In particular if $\phi(t)=t^{2-\epsilon}$ for any $\epsilon>0$, then (\ref{main1}) must hold.

For $0<\de<1$, we let $w(x) = |x|^{1-\de}$ and it is easy to see
that $[w]_{A_2} \sim 1/\de\,.$ We now consider the function
$$f(x) = x^{-1+\de} \, \chi_{(0,1)}(x)$$
and observe that $f$ is in $L^2(w)$ with
$\|f\|_{L^2(w)}=1/\sqrt{\de}\,.$ To estimate the $L^2(w)$-norm of
$[b,H]f\,,$ we claim
$$
|[b,H]f(x)| \geq \frac{1}{\de^2} \, f(x)
$$
and hence
$$\|[b,H]f\|_{L^2(w)} \geq \, \frac{1}{\de^2}\, \|f\|_{L^2(w)}
$$
from which the sharpness \eqref{main1} will follow.

We now prove the claim: if $0<x<1$,
$$
[b,H]f(x) = \int_0^1 \frac{\log(x)-\log (y)}{x-y}\,y^{-1+\de}\,dy=
\int_0^1 \frac{\log(\frac{x}{y})}{x-y}\,y^{-1+\de}\,dy
$$
$$
= x^{-1+\de} \, \int_0^{1/x}
\frac{\log(\frac{1}{t})}{1-t}\,t^{-1+\de}\,dt
$$
Now,
$$
\int_0^{1/x} \frac{\log(\frac{1}{t})}{1-t}\,t^{-1+\de}\,dt =
\int_0^1 \frac{\log(\frac{1}{t})}{1-t}\,t^{-1+\de}\,dt +
\int_1^{1/x} \frac{\log(\frac{1}{t})}{1-t}\,t^{-1+\de}\,dt
$$
and since $\frac{\log(\frac{1}{t})}{1-t}$ is positive for $(0,1)\cup
(1,\infty)$ we have for $0<x<1$
$$
|[b,H]f(x)| >  x^{-1+\de} \,\int_0^1
\frac{\log(\frac{1}{t})}{1-t}\,t^{-1+\de}\,dt.
$$
But since
$$
\int_0^1 \frac{\log(\frac{1}{t})}{1-t}\,t^{-1+\de}\,dt> \int_0^1
\log(\frac{1}{t})\,t^{-1+\de}\,dt= \int_0^{\infty}s\,e^{-s\de}\, ds=
\frac{1}{\de^2}
$$
and the claim
$$
|[b,H]f(x)| \geq \frac{1}{\de^2} \, f(x)
$$
follows. One can find this example and
 similar examples which show the commutators
with the Beurling-Ahlfors operator and the Riesz transforms obey the
quadratic growth  in \cite{Ch}.

\subsection{Sharp example for the $k$th-order commutator with the Riesz
transforms}

We now consider the higher order commutators with degree $k\geq 1$
in the case of the $j$-th directional Riesz transform on
$\mathbb{R}^n$:
\[
R^k_{j,b}f(x):=p.v.\int_{\mathbb{R}^n}\frac{(x_j-y_j)(b(x)-b(y))^kf(y)}{|x-y|^{n+1}}\,dy.
\]
%
To demonstrate the sharpness of the estimate for the higher order
commutator with the Riesz transforms, we show sharpness for $1<p\leq
2\,.$ Then we can extend the sharpness for all $1<p<\infty\,$ by
using a duality argument, because $R_j^{\ast}=-R_j\,,$  so the
higher order commutators of the Riesz transforms are  almost
self-adjoint operators. For $1<p\leq 2\,,$ we consider weights
$w(x)=|x|^{(n-\delta)(p-1)}\,,$
$f(x)={|x|}^{\delta-n}\chi_{E}(x)$ where
$E=\{y|y\in(0,1)^n\cap B(0,1)\}\,,$ $b(x)=\log|x|\,,$ and evaluate
$L^p(w)$-norm over $\Omega=\{x\in B(0,1)^c\,|\,
{x_i}<0\textrm{ for all }i=1,2,...,n\}\,.$ Note that, for all $y\in
E$ and $x\in\Omega$,
\[
|\,x_j-y_j|\geq |\,x_j|\textrm{ and } |x-y|\leq |y|+|x|\,.
\]
Then for $x\in\Omega$,
\begin{align*}
\big|R^{k}_{j,b}f(x)\big|&=\bigg|\int_E\frac{(x_j-y_j)(\log|x|-\log|y|)^k|y|^{\delta-n}}{|x-y|^{n+1}}\,dy\bigg|\\
&=\int_{E}\frac{|\,x_j-y_j|(\log(|x|/|y|))^k|y|^{\delta-n}}{|x-y|^{n+1}}\,dy
\geq |x_j|\int_E\frac{(\log(|x|/|y|))^k|y|^{\delta-n}}{(|y|+|x|)^{n+1}}\,dy\\
&= |\,x_j|\int_{E\cap
S^{n-1}}\int_0^1\frac{(\log(|x|/r))^kr^{\delta-n}r^{n-1}}{(r+|x|)^{n+1}}\,drd\sigma\\
&=c\,|\,x_j|\int_0^{1/|x|}\frac{(\log(1/t))^k(t|x|)^{\delta-1}|x|}{(|x|+t|x|)^{n+1}}\,dt
=\frac{c\,|\,x_j|}{|x|^{n+1-\delta}}\int_0^{1/|x|}\frac{(\log(1/t))^kt^{\delta-1}}{(1+t)^{n+1}}\,dt\\
&\geq
\frac{c\,|\,x_j|}{|x|^{n+1-\delta}}\bigg(\frac{|x|}{|x|+1}\bigg)^{n+1}\int_0^{1/|x|}(\log(1/t))^kt^{\delta-1}\,dt\,.
\end{align*}
Note that the constant $c=c(n)$  is the surface measure of $E\cap S^{n-1}$, depends on the dimension only.
On the other hand,
\begin{align*}
\int_0^{1/|x|}(\log(1/t))^kt^{\delta-1}\,dt&=\int_{\log|x|}^{\infty}
s^ke^{-\delta s}\,ds=-\frac{1}{\delta}e^{-\delta
s}s^k\bigg|^{\infty}_{\log|x|}+\frac{k}{\delta}\int_{\log|x|}^{\infty}e^{-\delta s}s^{k-1}\,ds\\
&=\frac{1}{\delta}e^{-\delta\log|x|}(\log|x|)^k+\frac{k}{\delta}\int_{\log|x|}^{\infty}e^{-\delta s}s^{k-1}\,ds\\
&=\frac{(\log|x|)^k}{\delta|x|^{\delta}}+\frac{k}{\delta}\int_{\log|x|}^{\infty}e^{-\delta
s}s^{k-1}\,ds\,.
\end{align*}
For $x\in\Omega$, $(\log|x|)^k/\delta|x|^{\delta}$ is positive,
therefore after neglecting some positive terms and applying the
integration by parts $k-1$ times, we get
\begin{align*}\int_0^{1/|x|}(\log(1/t))^kt^{\delta-1}\,dt
&\geq  \frac{k}{\delta}\int_{\log|x|}^{\infty}e^{-\delta s}s^{k-1}\,ds\\
&\geq
\cdots\geq\frac{k!}{\delta^{k-1}}\int_{\log|x|}^{\infty}e^{-\delta
s}s\,ds
\geq
\frac{k!}{\delta^{k+1}|x|^{\delta}}\,.
\end{align*}
Combining the previous computations, we have that for
$x\in\Omega$, and recalling that
$f(x)={|x|}^{\delta -n}\chi_E(x)$,
\[
\big|R^{k}_{j,b}f(x)\big|\geq \frac{k!\,
c \,  |\,x_j|}{\delta^{k+1}(|x|+1)^{n+1}}\,. \] Thus, we estimate
\begin{align*}
\big\|R^k_{j,b}f\big\|^p_{L^p(w)}&\geq\int_{\Omega}\bigg(\frac{k!\, c\,|\,x_j|}{\delta^{k+1}(|x|+1)^{n+1}}\bigg)^p|x|^{(n-\delta)(p-1)}\,dx\\
&=\bigg(\frac{k! \, c}{\delta^{k+1}}\bigg)^p\int_{\Omega}\frac{|\,x_j|^p|x|^{(n-\delta)(p-1)}}{(|x|+1)^{p(n+1)}}\,dx\\
&\geq\bigg(\frac{k! \, c}{\delta^{k+1}}\bigg)^p\int_{\Omega\cap
S^{n-1}}\int_1^{\infty}\frac{\gamma_j^pr^pr^{(n-\delta)(p-1)}r^{n-1}}{(r+1)^{p(n+1)}}\,drd\sigma(\gamma)\\
&=\bigg(\frac{k! \, c_{n,p}}{\delta^{k+1}}\bigg)^p\int_1^{\infty}r^{\delta(1-p)-1}\,dr=(k!)^p\frac{c^p_{n,p}}{p-1}\delta^{-p(k+1)-1}\,.
\end{align*}
Since $\|f\|^p_{L^p(w)}=1/\delta\,,$  and $[w]_{A_p}\sim
1/\delta^{p-1}$, we conclude that
\[
\big\|R^{k}_{j,b}f\big\|_{L^p(w)}\geq k! \,\frac{c_{n,p}}{(p-1)^{1/p}}\,\delta^{-(k+1)-1/p}
\sim k!\,[w]^{\frac{k+1}{p-1}}_{A_p}\|f\|_{L^p(w)}\,.
\]
This shows that Theorem \ref{main}, Corollary \ref{COR1}, and
Corollary \ref{COR2} are sharp in the multidimensional case.
The constant $c_{n,p}\to c_{n,1} >0$ as $p\to 1$, therefore the estimate
blows up as $p\to 1$, as it should since the operators are not bounded in $L^1(w)$.

\section{Appendix: the sharp reverse H\"older's inequality for $A_2$
weights}\label{append}

In this section we give a proof of Lemma \ref{RHA2}, namely if $w\in
A_2$, then
\begin{equation}\label{a3}
\left ( \frac{1}{|Q|}\int_Qw^{r_w}dx\right )^{\frac{1}{r_w}} \le
\frac{2}{|Q|}\int_Qw,
\end{equation}
where $r_{w}=1+\frac{1}{2^{5+n}[w]_{A_2}}$.

\begin{proof}

Let $w_Q=\frac{1}{|Q|}\int_Qw$ and $\delta>0$
\[
\frac{1}{ |Q| } \int_Q  w(x)^{1+\delta}\,  dx =  \frac{1}{ |Q| }
\int_Q  w(x)^{\delta}\,  w(x)dx = \frac{\delta}{|Q|}
\int_{0}^{\infty} \lambda^{\delta} w( \{ x \in Q : w(x) > \lambda \}) \,
\frac{d\lambda}{\lambda}
\]
\[
= \frac{\delta}{|Q|} \int_{0}^{w_Q} \lambda^{\delta} w( \{ x \in Q : w(x)
> \lambda \}) \,\frac{d\lambda}{\lambda}
+ \frac{\delta}{|Q|} \int_{w_Q}^{\infty} \lambda^{\delta} w( \{ x \in Q :
w(x)>\lambda \}) \, \frac{d\lambda}{\lambda}
 = I+II.
\]
Observe that $I\le (w_Q)^{\delta+1}$, where $w_Q= \frac{w(Q)}{|Q|}$.

To estimate $II$ we make two claims. The first is the following
observation: if we let
$$E_Q=\{x\in Q: w(x)\le  \frac{1}{2 [w]_{A_2}}w_Q\},$$
then
\begin{equation}\label{claimI}
|E_Q| \le \frac12 |Q|.
\end{equation}
Indeed, by (Cauchy-Schwartz) we have for any $f\ge 0$
$$ \left( \frac{1}{ |Q| } \int_{Q} f(y)\, dy \right)^{2} w(Q)\le
[w]_{A_2} \int_{Q} f(y)^{2}\, w(y)dy,
$$
and hence if $E\subset Q$, setting $f=\chi_E$,
$$ \left( \frac{|E|}{ |Q| } \right)^{2} \le
[w]_{A_2}\frac{w(E)}{w(Q)}
$$
and in particular, by definition of $E_Q$,
$$ \left( \frac{|E_Q|}{ |Q| } \right)^{2} \le
[w]_{A_2}\frac{w(E_Q)}{w(Q)} \le [w]_{A_2}\frac{w_Q}{w(Q)} |E_Q|
\frac{1}{2 [w]_{A_2}}=\frac{1}{2}\frac{|E_Q|}{|Q|},
$$
from which the claim follows. In particular, this implies that

\begin{equation}\label{claimIforcomplement}
 |Q|\leq 2|Q\backslash E_Q|=
    2 |\{ x\in Q: w(x)> \frac{1}{2 [w]_{A_2}}w_Q\} |.
\end{equation}

The second claim is the following
\begin{equation}\label{claimII}
w(\{x\in Q: w(x)>\la \}) \le 2^{n+1} \la \, |\{x\in Q: w(x)>
\frac{\la}{2 [w]_{A_p}}w_Q\}| \qquad \la > w_Q.
\end{equation}
Indeed, since $\la > w_Q$ to prove this claim we consider the
standard (local) Calde´r\'on-Zygmund decomposition of $w$ at level
$\la$. Then there is a family of disjoint cubes $ \{Q_{i}\} $
contained in $Q$ satisfying
\[
\la <  w_{Q_i}  \le 2^n\,\la
\]
for each $i$. Now, observe that except for a  null set we have
$$\{x\in
Q: w(x)>\la \} \subset \{x\in Q: M_Q^dw(x)>\la \}= \cup_i Q_i,$$
where $M^{d}_Q$ is the dyadic maximal operator restricted to a cube
$Q$.
This together with \eqref{claimIforcomplement} yields
\[
w(\{x\in Q: w(x)>\la \}) \le \sum_{ i } w(Q_{i})
\]
\[
\le 2^n\la\, \sum_i  |Q_{i}| \le 2^{n+1}\la\, \sum_i |\{x\in Q_i:
w(x)>  \frac{1}{2 [w]_{A_2}}w_{Q_i}\}|
\]
\[
\le 2^{n+1} \la\, |\{x\in Q: w(x)>  \frac{1}{2 [w]_{A_2}}\la\}|
\]
since $ w_{Q_i}> \la$. This proves the second claim \eqref{claimII}.

Now, combining

$$
II= \frac{\delta}{|Q|} \int_{w_Q}^{\infty} \lambda^{\delta} w( \{ x \in Q
: w(x)>\la \}) \, \frac{d\la}{\la}
$$
\[
\le  \frac{2^{n+1}\,\delta}{|Q|} \int_{w_Q}^{\infty} \la^{\delta +1}
|\{x\in Q: w(x)>  \frac{1}{2 [w]_{A_2}}\la\}| \, \frac{d\la}{\la}
\]
\[
\leq  \left(2 [w]_{A_2}\right)^{1+\delta}  2^{n+1}\,\delta
\frac{1}{|Q|} \int_{ \frac{w_Q}{2 [w]_{A_2}} }^{\infty} \la^{\delta
+1} |\{ x \in Q : w(x) > \la \}| \, \frac{d\la}{\la}
\]
\[
\le \left(2 [w]_{A_2}\right)^{1+\delta} 2^{n+1}
\frac{\delta}{1+\delta}\frac{1}{|Q|} \int_{Q} w^{1+\delta}\,dx.
\]
 Setting here $\delta= \frac{1}{2^{5+n}[w]_{A_2}}$, we obtain using
that $t^{1/t}\le 2$, $t\ge 1$
$$II\leq \frac12 \frac{1}{|Q|}\int_Qw^{\delta+1}dx$$
and finally
$$\frac{1}{|Q|}\int_Qw^{\delta+1}dx\le
2(w_Q)^{\delta+1},$$
which proves \eqref{a3}.

\end{proof}

\end{document}